\newtheorem{thm}{Theorem}[section]
\newtheorem{lem}[thm]{Lemma}
\theoremstyle{definition}
\theoremstyle{remark}
\numberwithin{equation}{section}
\begin{document}
\title{On a damped nonlinear beam equation}
\author{David Raske}
\email{nonlinear.problem.solver@gmail.com}
\subjclass[2020]{Primary 35L35; Secondary 35Q99, 35L76}
\keywords{nonlinear beam equation; elliptic boundary value problem; suspension bridges}

\begin{abstract}
In this note we analyze the large time behavior of solutions to an initial/boundary problem involving a damped nonlinear beam equation. We show that under physically realistic conditions on the nonlinear terms in the equation of motion the energy is a decreasing function of time and solutions converge to a stationary solution with respect to a desirable norm.
\end{abstract}                        

\maketitle

\section{Introduction}
Let $a$, $b$ be two real numbers. Let $T$ be a positive real number. Let $F_1$ and $F_2$ be two functions from $\mathbb{R}$ into $\mathbb{R}$. Let $f$ be a map from $[0,T]$ into $L^2(a,b)$
Initial/boundary value problems of the form
\begin{equation}\label{eq 1.2}
\begin{split}
& m u_{tt} + \sigma u_{xxxx} + F_1(u_t) + F_2(u) = f(x,t), \text{ on } (a,b) \times (0,T) \\
& u(a,t)=0=u(b,t),  \text{ for all } t \text{ in } (0,T) \\
&  u_{xx}(a,t)=0=u_{xx}(b,t),  \text{ for all } t \text{ in } (0,T) \\
& u(x,0)=u_0(x), u_t(x,0) = u_1(x) \text{ for all } x \text{ in } (a,b). \\
\end{split}
\end{equation}
arise naturally in the study of vibrations in slender beams. In particular, they have been used to model vibrations in suspension bridges and railroad tracks. A treatment of the existence and uniqueness problem of the above can be found in \cite{R}.

Before we continue we need to define some spaces.  Let $\Omega := (a,b)$, and let $H^2_*(\Omega)$ be the intersection of $H^1_0(\Omega)$ with $H^2(\Omega)$. It is equipped with the inner-product  
\begin{equation}
(u,v)_{H^2_*} = \int_\Omega u_{xx} v_{xx} dx.
\end{equation}
Let $H^4_*(\Omega)$ be the elements $u$ of $H^4(\Omega)$ such that $u \in H^2_*(\Omega)$ and $u_{xx} \in H^2_*(\Omega)$; it is equipped with the inner-product 
\begin{equation}
(u,v)_{H^4_*} = \int_\Omega u_{xxxx} v_{xxxx} dx.
\end{equation}
 In \cite{R} it is shown that the above inner-products make $H^2_*(\Omega)$ and $H^4_*(\Omega)$ Hilbert spaces.

Now, suppose there exists a function $u: [0,T] \rightarrow L^2((a,b))$ such that $u \in C^0( [0,T] ,H^4_0((a,b)) )$, $u \in C^1( [0,T] ,H^2_0((a,b)) )$, $u \in C^2( [0,T] ,L^2((a,b)) )$, $u(0)=u_0$, $u'(0)=u_1$, and such that for all $\phi \in C^\infty_c(0,T)$ and $v \in L^2((a,b))$ we have
\begin{equation}
\begin{split}
&\int_0^T m (u'',v)_{L^2}\phi(t) \, dt \\
& = -\int_0^T  [\sigma (u_{xxxx},v)_{L^2} + (F_1(u'),v)_{L^2} + (F_2(u),v)_{L^2} -( f(x,t),v)_{L^2}]\phi(t) \, dt. 
\end{split}
\end{equation}
We will then call $u$ a pseudo classical solution of \eqref{eq 1.2}. 

Now, suppose there exists a function $u: [0,\infty) \rightarrow L^2((a,b))$ such that $u \in C^0( [0,\infty) ,H^4_0((a,b)) )$, $u \in C^1( [0,\infty) ,H^2_0((a,b)) )$, $u \in C^2( [0,\infty) ,L^2((a,b)) )$, $u(0)=u_0$, $u'(0)=u_1$, and such that for all $\phi \in C^\infty_c(0,\infty)$ and $v \in L^2((a,b))$ we have
\begin{equation}
\begin{split}
&\int_0^\infty m (u'',v)_{L^2}\phi(t) \, dt \\
& = -\int_0^\infty  [\sigma (u_{xxxx},v)_{L^2} + (F_1(u'),v)_{L^2} + (F_2(u),v)_{L^2} -( f(x,t),v)_{L^2}]\phi(t) \, dt. 
\end{split}
\end{equation}
We will then call $u$ a global pseudo classical solution of the initial/boundary value problem
\begin{equation}\label{pseudo}
\begin{split}
& m u_{tt} + \sigma u_{xxxx} + F_1(u_t) + F_2(u) = f(x,t), \text{ on } (a,b) \times (0,\infty) \\
& u(a,t)=0=u(b,t),  \text{ for all } t \text{ in } (0,\infty) \\
&  u_{xx}(a,t)=0=u_{xx}(b,t),  \text{ for all } t \text{ in } (0,\infty) \\
& u(x,0)=u_0(x), u_t(x,0) = u_1(x) \text{ for all } x \text{ in } (a,b). \\
\end{split}
\end{equation}

A natural question to ask if a global pseudo classical solution exists is whether or not it converges to a solution of the boundary value problem
\begin{equation} \label{eq 1.1}
\begin{split}
& \sigma u_{xxxx} + F_2(u) = f(x) \text{ on } (a,b),  \\
& u(a)=0=u(b), \\
& u_{xx}(a)=0=u_{xx}(b). \\
\end{split}
\end{equation}
as $t \rightarrow \infty$. Since \eqref{eq 1.2} is derived from a damped mechanical problem, it's also natural to ask whether or not the total energy is decreasing or not if $f$ is independent of time. These two questions are the focus of this paper, and the main result is 

\begin{thm} Let $a$ and $b$ be two real numbers, with $a < b$. Let $m$, $\sigma$, and $c$ be three positive real numbers. Let $d$ be a non-negative real number. Let $\Omega$ be the open interval $(a,b)$. Let $f$ be a $C^1$ map from $[0,\infty)$ into $L^2(\Omega)$ that is independent of $t \in [0,\infty)$. Let $F_1: \mathbb{R} \rightarrow \mathbb{R}$ be defined as follows: If we write $F_1=F_1(x)$, then $F_1(x) = cx + d|x|x$. Let $F_2: \mathbb{R} \rightarrow \mathbb{R}$ be a continuously differentiable function such that $F_2$ is non-decreasing and such that there exists a real number $r$ such that $F_2(r)=0$. Furthermore, let $u_0$ be an element of $H_*^4(\Omega)$ and let $u_1$ be an element of $H^2_*(\Omega)$. Suppose \eqref{pseudo} possesses a unique global pseudo classical solution, $u$. Then, this solution satisfies $u(t) \rightarrow \hat{u}$ in $H^2_*(\Omega)$ and $u'(t) \rightarrow 0$ in $L^2(\Omega)$, where $\hat{u}$ is the unique solution of the boundary value problem \eqref{eq 1.1}. 
\end{thm}

Here we restrict our attention to the nonlinear damping term, $F_1(u_t) = cu_t + d|u_t|u_t$, because it is thought to provide a realistic model of damping, but is simple enough to result in a tractable problem. The linear term dominates when $|u_t|$ is small, but $|F_1(u_t)|$ grows like $d(u_t)^2$ as $|u_t| \rightarrow \infty$. This seems to be more realistic than just $F_1(u_t) = cu_t$ or $F_1(u_t)=d |u_t|u_t$.

In section two of this paper we make some observations about how global pseudo classical solutions of \eqref{eq  1.2} evolve as $t \rightarrow \infty$, for general damping terms $F_1$. In section three of this paper we use these observations to prove Theorem 1.2. 

Finally, it should be noted that Theorem 1.2 holds with $F_1(x) = a x + b|x|^{p-2}x$, where $a$ and $b$ are a positive real numbers and where $p$ is a positive real number greater than or equal to $2$ in value. It should also be noted that the results of Theorem 1.1 hold for a much broader range of elliptic differential operators and boundary conditions that the ones used here. We restrict our attention to the one dimensional bi-Laplacian augmented with homogeneous Navier boundary conditions, because the author is interested in bending phenomena and does not want to distract attention away from the nonlinear terms in the equation of motion.

\section{Preliminaries}
The goal of this section is to make some useful observations about how solutions of \eqref{eq 1.2} evolve with respect to time, especially when $f$ is independent of $t$. So let $T$, $m$, and $\sigma$ be positive real numbers, and let $a$ and $b$ be two real numbers with $a < b$. Let $\Omega$ be the interval $(a,b)$. Let $f_2: \mathbb{R} \rightarrow \mathbb{R}$ be a $C^2$ function such that $f_2(t) = \int_{r}^t F_2(s) ds$, where the real number $r$ is defined above. Let $F_1: \mathbb{R} \rightarrow \mathbb{R}$ be a non-decreasing $C^1$ function such that $F_1(0)=0$. Let $f \in C^1([0,\infty); L^2(\Omega))$. Finally, let $u_0 \in H^4_*(\Omega)$ and let $u_1 \in H^2_*(\Omega)$. 

Now suppose that (1.1) has a unique pseudo classical solution $u_T$. It follows that for all $v \in H^2_*(\Omega)$ we have
\begin{equation} \label{eq 2.1}
\begin{split}
& m<u_T''(t),v> + (F_1(u_T'(t),v)_{L^2} + \sigma(u_T(t),v)_{H^2_*} + (F_2(u_T(t)),v)_{L^2} \\ & = (f(t),v)_{L^2},
\end{split}
\end{equation}
for all $t \in [0,T]$. This, in turn, allows us to write
\begin{equation} \label{eq 2.2}
\begin{split}
& m<u_T'(t),u_T'(t)> + (F_1(u_T'(t),u_T'(t))_{L^2} + \sigma(u_T(t),u_T'(t))_{H^2_*} + \\ & (F_2(u_T(t)),u_T'(t))_{L^2} = (f(t),u_T'(t))_{L^2},
\end{split}
\end{equation}
for almost every $t \in [0,T]$. Since $u_T(t)$ is a pseudo classical solution of \eqref{eq 1.2} all of the above terms are summable. It follows that we can integrate all of them from $0$ to $t$, where $0 \leq t \leq T$. This leaves us with
\begin{equation} \label{eq 2.3}
\begin{split}
& \int_0^t (m<u_T''(s),u_T'(s)> + (F_1(u_T'(s),u_T'(s))_{L^2} + \sigma(u_T(s),u_T'(s))_{H^2_*} + \\ & (F_2(u_T(s)),u'_T(s))_{L^2}) ds = \int_{0}^t (f(s),u_T'(s))_{L^2} ds,
\end{split}
\end{equation}
for all $t \in [0,T]$. It follows that
\begin{equation} \label{eq 2.4}
\begin{split}
& \int_0^t (m \frac{1}{2} \frac{d}{ds} (u_T'(s),u_T'(s))_{L^2} + (F_1(u_T'(s)),u_T'(s))_{L^2} + \sigma \frac{1}{2} \frac{d}{ds} (u_T(s),u_T(s))_{H^2_*} + \\ & (F_2(u_T(s)),u_T'(s))_{L^2}) ds = \int_0^t (f(s),u_T'(s))_{L^2} ds,
\end{split}
\end{equation}
for all $t \in [0,T]$. Let $V$ be a function from $H^2_*(\Omega)$ into the real numbers such that $V(u) = \int_\Omega f_2(u(x)) dx$ for all $u \in H^2_*(\Omega)$. Calculus then gives us that
\begin{equation} \label{eq 2.5}
\begin{split}
& \frac{m}{2} (u_T'(t),u_T'(t))_{L^2} + \int_0^t (F_1(u_T'(s)),u_T'(s))_{L^2} ds  + \frac{\sigma}{2} (u_T(t),u_T(t))_{H^2_*} \\ & + V(u_T(t)) = \int_{0}^t (f(s), u'_T(s))_{L^2} ds + \frac{m}{2} (u_T'(0),u_T'(0))_{L^2} +  \\ & \frac{\sigma}{2} (u_T(0),u_T(0))_{L^2} + V(u_T(0)),
\end{split}
\end{equation}
for all $t \in [0,T]$. 

Now suppose \eqref{pseudo} has a unique global pseudo classical solution, $u(t)$. It follows that a pseudo classical solution of \eqref{eq 1.2} exists for all $T  \in (0,\infty)$, and $u_T(t) = u(t)$ for all $t \in [0,T]$. This, in turn, allows us to conclude that
\begin{equation} \label{limited}
\begin{split}
& \frac{m}{2} (u'(t),u'(t))_{L^2} + \int_0^t (F_1(u'(s)),u'(s))_{L^2} ds  + \frac{\sigma}{2} (u(t),u(t))_{H^2_*} \\ & + V(u(t)) = \int_{0}^t (f(s), u'(s))_{L^2} ds + \frac{m}{2} (u'(0),u'(0))_{L^2} +  \\ & \frac{\sigma}{2} (u(0),u(0))_{L^2} + V(u(0)),
\end{split}
\end{equation}
for all $t \in (0,\infty)$. 

It is difficult to draw substantial conclusions from the above equation for general $f$, but if $f$ is independent of time, we can say much more. 
We will now assume that $f$ does not depend on $t$ for the remainder of the section. Now note that if $f$ is independent of $t$, \eqref{limited} implies that
\begin{equation} \label{eq 2.6a}
\begin{split}
& \frac{m}{2} (u'(t),u'(t))_{L^2} + \frac{\sigma}{2} (u(t),u(t))_{H^2_*} + V(u(t)) - (f,u(t))_{L^2} \\ & = -(f,u(0))_{L^2} + \frac{m}{2} (u'(0),u'(0))_{L^2} + \frac{\sigma}{2} (u(0),u(0))_{L^2} + V(u(0)) \\ & -  \int_0^t (F_1(u'(s)),u'(s))_{L^2} ds.
\end{split}
\end{equation}

Let us now turn our attention to finding two functionals that will be useful in the analysis of how solutions of \eqref{eq 1.2} evolve with respect to time if $f$ is independent of time. Since this problem is motivated by a problem in continuum mechanics, previous experience suggests that we should look at the potential energy and the total energy. The former is
\begin{equation} \label{static}
\Sigma_T(u) =  \frac{\sigma}{2} (u,u)_{H^2_*} + V(u) - (f,u)_{L^2}.
\end{equation}
The latter is 
\begin{equation} \label{eq 2.6b}
E_u(t) = \frac{m}{2} (u'(t),u'(t))_{L^2} + \frac{\sigma}{2} (u(t),u(t))_{H^2_*} + V(u(t)) - (f,u(t))_{L^2}.
\end{equation}
Notice that
\begin{equation} \label{properties}
\Sigma_T \text{ is coercive and bounded below on } H^2_*(\Omega).
\end{equation}
An immediate consequence of \eqref{eq 2.6a} is that 
\begin{equation} \label{decreasing}
E_u \text{ is non-increasing with respect to time,} 
\end{equation}
and, in particular, $E_u(t)$ is bounded from above on $[0,\infty)$. Combining this observation with \eqref{properties} gives us that
\begin{equation} \label{norm}
\text{There exists a constant } C \text{ such that } (u(t),u(t))_{H^2_*} \leq C \text{ for all } t \in (0,\infty).
\end{equation}
Now, combining \eqref{eq 2.6a} with \eqref{properties} we see that
\begin{equation} \label{eq 2.7}
\begin{split}
& \text{There exists a constant } C \text{ such that } \int_0^t (F_1(u'(s)),u'(s))_{L^2} ds \leq C  \\ & \text{ for all } t \in (0,\infty).
\end{split}
\end{equation}

Before we continue, let us make another observation about $\Sigma_T$. Note that if $f_2$ is convex with a derivative that vanishes somehere on the real line, we can use the direct method of the calculus of variations to show that there exists  a unique minimizer of $\Sigma_T$ over $H^2_*(\Omega)$. Furthermore, this minimizer will be the unique (weak) solution of the boundary value problem \eqref{eq 1.1}. Previous experience then suggests that if $f_2$ is a convex function whose derivative vanishes somewhere then solutions of \eqref{eq 1.2} should converge to the solution of  \eqref{eq 1.1} with respect to $H^2_*(\Omega)$ norm as time goes to infinity. This is the subject of the next section.

Finally, we prove a result that will be useful in the next section.

\begin{lem} Let $f: [0,\infty) \rightarrow \mathbb{R}$ be a continuous non-negative function. Suppose as well that there exists a positive real number $C$ such that $\int_0^t f(s) ds \leq C$ for all $t \in (0,\infty)$. Then there exists a sequence $\{t_n\}_{n=1}^\infty$ with $t_n \uparrow \infty$ such that $\int_{t_n}^{t_n+1} f(s) ds \rightarrow 0$ as $n \rightarrow \infty$, and 
\begin{equation}
\lim_{n \rightarrow \infty} [f(t_n)+f(t_n+1)] = 0.
\end{equation}

\end{lem}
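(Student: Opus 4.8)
The plan is to first upgrade the boundedness hypothesis into genuine integrability, and then to extract the sequence by an averaging argument applied to a cleverly chosen auxiliary function. First I would observe that since $f \geq 0$, the function $g(t) = \int_0^t f(s)\,ds$ is non-decreasing, and by hypothesis it is bounded above by $C$; hence it converges to a finite limit as $t \to \infty$, i.e. $f \in L^1([0,\infty))$. Writing $a_n = \int_n^{n+1} f(s)\,ds$ for each non-negative integer $n$, the series $\sum_n a_n$ equals $\int_0^\infty f(s)\,ds$ and therefore converges, so in particular $a_n \to 0$ as $n \to \infty$.

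The crux of the argument is that I need a single point $t_n$ at which $f$ is small both at $t_n$ and at $t_n + 1$ — two points a unit apart — and the naive choice of a point where $f$ alone is small does not control the shifted value. To handle both locations at once I would introduce $h(t) = f(t) + f(t+1)$, which is continuous and non-negative. Integrating over $[n, n+1]$ and substituting $s = t+1$ in the second piece gives
\[
\int_n^{n+1} h(t)\,dt = \int_n^{n+1} f(t)\,dt + \int_{n+1}^{n+2} f(s)\,ds = a_n + a_{n+1},
\]
which tends to $0$. Since $h$ is continuous on $[n,n+1]$, its minimum there is at most its average $a_n + a_{n+1}$; I would let $t_n \in [n, n+1]$ be a point realizing this minimum (equivalently, invoke the mean value theorem for integrals), so that $f(t_n) + f(t_n+1) = h(t_n) \leq a_n + a_{n+1} \to 0$, which is the pointwise conclusion.

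The integral conclusion then comes essentially for free: because $t_n \in [n, n+1]$, the interval $[t_n, t_n+1]$ is contained in $[n, n+2]$, whence $\int_{t_n}^{t_n+1} f(s)\,ds \leq \int_n^{n+2} f(s)\,ds = a_n + a_{n+1} \to 0$. Finally, $t_n \geq n$ forces $t_n \to \infty$, and since $t_{n+1} \geq n+1 \geq t_n$ the sequence is non-decreasing, so $t_n \uparrow \infty$ as required.

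The only genuine obstacle is the simultaneous pointwise control at the two locations $t_n$ and $t_n+1$; once one spots that forming $f(t)+f(t+1)$ and integrating over a unit block telescopes the shift into the adjacent block integral, both the pointwise and the integral statements follow from the single estimate $a_n + a_{n+1} \to 0$. The remaining steps are routine, so I would expect the write-up to be short.
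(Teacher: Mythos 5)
Your proof is correct and takes essentially the same approach as the paper: both form the shifted sum $h(t)=f(t)+f(t+1)$, integrate it over a unit block, and apply the mean value theorem for integrals (equivalently, minimum $\leq$ average) to extract $t_n$, then note $[t_n,t_n+1]$ sits inside a block whose integral of $f$ tends to zero. The only difference is cosmetic: you work over consecutive blocks $[n,n+1]$ and use convergence of the series $\sum_n \int_n^{n+1} f$, whereas the paper uses the even-indexed blocks $(2n,2n+1)$ and disjointness of the intervals $(2n,2n+2)$ to reach the same conclusions.
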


\begin{proof} Since $f$ is non-negative and since there exists a positive real number $C$ such that $\int_{0}^t f(s) ds \leq C$ for all $t \in [0,\infty)$, it follows that $\int_{2n}^{2n+1} f(s) ds \rightarrow 0$ as $n \rightarrow \infty$. Calculus then gives us that $\int_{2n}^{2n+1} f(s+1) ds \rightarrow 0$ as $n \rightarrow \infty$. It follows that 
\begin{equation}
\int_{2n}^{2n+1} [f(s) + f(s+1)] ds \rightarrow 0 \text{ as } n \rightarrow \infty.
\end{equation}
This in turn allows us to use the mean value theorem for integrals to conclude that there exists a sequence $\{t_n\}_{n=1}^\infty$ with $t_n \in (2n,2n+1)$ for all positive integers $n$ such that 
\begin{equation}\label{limit}
f(t_n)+f(t_n+1) \rightarrow 0 \text{ as } n \rightarrow \infty. 
\end{equation}
Since $f$ is assumed to be non-negative we have that both $f(t_n)$ and $f(t_n+1)$ vanish as $n \rightarrow \infty$. Notice that we can also say that
\begin{equation}\label{increasing}
t_n \uparrow \infty.
\end{equation}

Now recall that $f$ is non-negative and that there exists a positive real number $C$ such that $\int_0^t f(s)ds \leq C$ for all $t \in [0,\infty)$. Note as well that the intersection of $(2m,2m+2)$ with $(2n,2n+2)$ is empty if $m$ and $n$ are positive integers with the property that $m \neq n$. It follows that 
\begin{equation}\label{sum}
\int_{t_n}^{t_n+1} f(s) ds \rightarrow 0 \text{ as } n \rightarrow \infty.
\end{equation}
Combining \eqref{limit} with \eqref{increasing} and \eqref{sum}, we see that we have the lemma.
 
\end{proof}

\section{Proof of Theorem 1.2}

Proceeding as in \cite{GF} we argue as follows. First recall that we are assuming that a unique global pseudo classical solution $u$ to \eqref{pseudo} exists. Now, note as well that the hypotheses of Theorem 1.1 give us that
\begin{equation}
(F_1(u'(t)),u'(t))_{L^2} = c||u'(t)||_{L^2}^2 + d||u'(t)||_{L^3}^3,
\end{equation}
for all $t \in (0,\infty)$. We can now combine \eqref{eq 2.7} with Lemma 2.1 to see that there is a sequence $\{t_n\}_{n=1}^\infty$ such that $t_n \uparrow \infty$,
\begin{equation} \label{eq 3.1}
\lim_{n \rightarrow \infty} \int_{t_n}^{t_n+1} ||u'(s)||_{L^2}^2 ds =0,
\end{equation}
\begin{equation} \label{eq 3.2}
 \lim_{n \rightarrow \infty} \int_{t_n}^{t_n+1} ||u'(s)||_{L^3}^3 ds =0,
\end{equation}
\begin{equation} \label{eq 3.3a}
\lim_{n \rightarrow \infty} (||u'(t_n)||_{L^2} + ||u'(t_n+1)||_{L^2}) = 0,
\end{equation}
and
\begin{equation} \label{eq 3.3b}
\lim_{n \rightarrow \infty} (||u'(t_n)||_{L^3} + ||u'(t_n+1)||_{L^3}) = 0.
\end{equation}
Let $v \in L^2((a,b))$, and notice that we also have that
\begin{equation} \label{eq 3.4}
\begin{split}
|\int_{t_n}^{t_n+1} <u''(s),v> ds| & = |\int_{t_n}^{t_n+1} (u''(s),v)_{L^2} ds| \\ & =  |(u'(t_n+1),v)_{L^2} - (u'(t_n),v)_{L^2}| \\ & \leq |(u'(t_n+1),v)_{L^2}| + |(u'(t_n),v)_{L^2}| \\ & \leq ||u'(t_n+1)||_{L^2}||v||_{L^2} + ||u'(t_n)||_{L^2}||v||_{L^2}.
\end{split}
\end{equation}
Recalling \eqref{eq 3.3a}, we see that the limit as $n$ goes to $\infty$ of $\int_{t_n}^{t_n+1} <u''(s),v> ds$ is zero. 

Now, let us bound another integral of a quantity that shows up in global pseudo classical solutions of \eqref{pseudo}. Suppose $p$ is a real number greater than or equal to $2$ in value, suppose $q$ satisfies 
\begin{equation} \label{conjugate}
\frac{1}{q} + \frac{p-1}{p}=1,
\end{equation}
and note that
\begin{equation} \label{eq 3.5}
\begin{split}
& |\int_{t_n}^{t_n + 1} (|u'(s)|^{p-2}u'(s),v)_{L^2} ds| \\ & \leq \int_{t_n}^{t_n+1} ||u'(s)||_{L^p}^{p-1}||v||_{L^q} ds \\ & \leq (\int_{t_n}^{t_n+1} ||u'(s)||_{L^p}^p ds)^\frac{p-1}{p}(\int_{t_n}^{t_n+1} ||v||_{L^q}^q ds)^\frac{1}{q}. 
\end{split}
\end{equation}

We can now use \eqref{eq 3.1} and \eqref{eq 3.2} to conclude that the right hand side of the last inequality above vanishes as $n \rightarrow \infty$, provided that $p \in \{2,3\}$. We can now use the mean value theorem for integrals to conclude that for all $v \in H^2_*(\Omega)$ there exists a sequence $\{t_n^v\}_{n=1}^\infty$ with $t^v_n \in (t_n,t_n+1)$ for all $n$ such that 
\begin{equation} \label{eq 3.6}
\lim_{n \rightarrow \infty} [m<u''(t^v_n),v> + c(u'(t^v_n),v)_{L^2} + d(|u'(t^v_n)|u'(t^v_n),v)_{L^2}]=0.
\end{equation}

Now fix $v \in H^2_*(\Omega)$ and recall that $||u(t)||_{H^2_*}$ is bounded independent of $t$. It follows that there exists a function $\hat{u}_v \in H^2_*(\Omega)$ such that
\begin{equation} \label{eq 3.7}
u(t^v_n) \rightharpoonup \hat{u}_v \in H^2_*(\Omega)
\end{equation}
up to a subsequence. By compactness we have $u(t^v_n) \rightarrow \hat{u}_v$ in $L^2(\Omega)$. Now take a function $w \neq v$ and consider $\{t^w_n\}_{n=1}^\infty$. Then $u(t^w_n) \rightarrow \hat{u}_w$ in $L^2(\Omega)$, and $u(t^w_n) \rightharpoonup \hat{u}_w$ in $H^2_*(\Omega)$. By \eqref{eq 3.1}, H\"{o}lder's inequality, and the Fubini-Tonelli Theorem we obtain
\begin{equation} \label{eq 3.8}
\begin{split}
||u(t^v_n) - u(t^w_n)||_{L^2}^2 & =  \int_\Omega |\int_{t^v_n}^{t^w_n} u'(s) ds|^2 dx \\ & \leq \int_\Omega (\int_{t^v_n}^{t^w_n} |u'|^2 ds)(|t^w_n-t^v_n|)dx \\  & \leq |t^w_n - t^v_n| \int_{t^v_n}^{t^w_n} ||u'(s)||_{L^2}^2 ds \\ & \leq \int_{t_n}^{t_n + 1} ||u'(s)||_{L^2}^2 ds \rightarrow 0
\end{split}
\end{equation}
as $n \rightarrow \infty$. This shows that the limit $\hat{u}_v$ is independent of $v$, so let's call it $\hat{u}$. At this point, we have showed that for all $v \in H^2_*(\Omega)$ 
\begin{equation} \label{eq 3.9}
\begin{split}
& \sigma (\hat{u},v)_{H^2_*} + (F_2(\hat{u}),v)_{L^2} - (f,v)_{L^2} \\ & = \lim_{n \rightarrow \infty} [m<u''(t^v_n),v> + c (u'(t^v_n),v)_{L^2} + d(|u'(t^v_n)|u'(t^v_n),v)_{L^2} \\ & + \sigma (u(t^v_n),v)_{H^2_*} + (F_2(u(t^v_n)),v)_{L^2} - (f,v)_{L^2}] \\ & =0.
\end{split}
\end{equation}
This shows that $\hat{u}$ is the unique solution to \eqref{eq 1.1}. Now, subtracting the Euler-Lagrange equation contained in \eqref{eq 1.1} from the equation of motion contained in \eqref{pseudo}, we see that for all $v \in H^2_*(\Omega)$
\begin{equation} \label{eq 3.10a}
\begin{split}
& <u''(t),v> + c(u'(t),v)_{L^2} + d(|u'(t)|u'(t),v)_{L^2} + (u(t)-\hat{u},v)_{H^2_*} \\ & + (F_2(u(t))-F_2(\hat{u}),v)_{L^2} =0 \
\end{split}
\end{equation}
for all $t \in (0,\infty)$. This in turn allows us to write
\begin{equation} \label{eq 3.10b}
\begin{split}
& \int_{t_n}^{t_n+1} [<u''(s),u(s)-\hat{u}> + c(u'(s),u(s)-\hat{u})_{L^2} \\ & + d(|u'(s)|u'(s),u(s)-\hat{u})_{L^2} + (u(s)-\hat{u},u(s)-\hat{u})_{H^2_*} \\ & + (F_2(u(s))-F_2(\hat{u}),u(s)-\hat{u})_{L^2}] ds =0 
\end{split}
\end{equation} 
for all positive integers $n$.

Now, let us investigate the limiting behavior of the terms on the left hand side of the above equation. Let $p$ be a real number greater than or equal to $2$ in value, and let $\frac{1}{q} + \frac{p-1}{p}=1$, then we have
\begin{equation} \label{eq 3.11}
\begin{split}
|\int_{t_n}^{t_n+1} (|u'(s)|^{p-2}u'(s),u(s)-\hat{u})_{L^2} ds| & \leq \int_{t_n}^{t_n+1} ||u'(s)||_{L^p}^{p-1}||u(s)-\hat{u}||_{L^q}  ds\\  & \leq C \int_{t_n}^{t_n+1} ||u'(s)||_{L^p}^{p-1} ds \\ & \leq C(\int_{t_n}^{t_n+1} ||u'(s)||_{L^p}^p ds)^\frac{p-1}{p}.
\end{split}
\end{equation}
The quantity on the right hand side of the last inequality above vanishes as $n \rightarrow \infty$, if $p \in \{2,3\}$. Here, and for the remainder of the proof, $C$ will denote a constant that is independent of $n$ and that can change value from line to line. Note, we used \eqref{norm} to obtain a uniform bound on $||u(t)-\hat{u}||_{L^q}$. 

We also have that
\begin{equation}\label{eq 3.12}
\begin{split}
& |\int_{t_n}^{t_n+1} <u''(s),u(s)-\hat{u}> ds| \\ & = |\int_{t_n}^{t_n+1}(u''(s),u(s)-\hat{u})_{L^2}ds|\\
& \leq |\int_{t_n}^{t_n+1} (u'(s),u'(s))_{L^2} ds| + |(u'(t),u(t)-\hat{u})_{L^2}|_{t_n}^{t_n+1}| \\ & \leq \int_{t_n}^{t_n+1} ||u'(s)||_2^2 ds+ C(||u'(t_n + 1)||_{L^2} + ||u'(t_n)||_{L^2}) \\ & \rightarrow 0 
\end{split}
\end{equation}
as $n \rightarrow \infty$. Here we used \eqref{norm} to obtain a uniform bound on $||u(t)-\hat{u}||_{L^2}$.

Now, since $F_2$ is a non-decreasing function we also have that
\begin{equation} \label{eq 3.13}
(F_2(u(t))-F_2(\hat{u}), u(t)-\hat{u}) \geq 0
\end{equation}
for all $t \in (0,\infty)$. We can now use \eqref{eq 3.10b} to conclude that
\begin{equation} \label{eq 3.14}
\int_{t_n}^{t_n+1} ||u(s)-\hat{u}||_{H^2_*}^2  ds \rightarrow 0 \text{ as } n \rightarrow \infty.
\end{equation}

Now, recalling \eqref{eq 3.1}, we have
\begin{equation} \label{eq integral}
\int_{t_n}^{t_n+1} (||u(s)-\hat{u}||_{H^2_*}^2 + ||u'(s)||_{L^2}^2) ds \rightarrow 0 \text{ as } n \rightarrow \infty
\end{equation}  
and hence we can use the mean value theorem for integrals to conclude that there exists a $\bar{t}_n \in [t_n,t_n+1]$ for all positive integers $n$ such that 
\begin{equation} \label{eq 3.15}
||u(\bar{t}_n) - \hat{u}||_{H^2_*}^2 + ||u'(\bar{t}_n)||_{L^2}^2 \rightarrow 0 \text{ as } n \rightarrow \infty.
\end{equation}
This, in turn, implies that
\begin{equation} \label{eq 3.16}
\lim_{n \rightarrow \infty} E_u(\bar{t}_n) = \Sigma_T(\hat{u}) = I := \min_{v \in H^2_*(\Omega)} \Sigma_T(v).
\end{equation}
But $E_u(\bar{t}_n)$ is decreasing so that we have 
\begin{equation}\label{3.17}
\lim_{t \rightarrow \infty} E_u(t) = I.
\end{equation}

Now recall from section two that $E_u(t) = \frac{1}{2} ||u'(t)||_{L^2}^2 + \Sigma_T(u(t))$. From this it follows that $E_u(t) \geq \frac{1}{2} ||u'(t)||_{L^2}^2 + I$. Passing to the limit as $t \rightarrow \infty$ we see that
\begin{equation} \label{eq 3.18}
I = \lim_{t \rightarrow \infty} E_u(t)\geq I +  \limsup_{t \rightarrow \infty} \frac{1}{2} ||u'(t)||_{L^2}^2.
\end{equation}
The above allow us to conclude that $u'(t)  \rightarrow 0$ as $t \rightarrow \infty$. This, in turn, allows us to write
\begin{equation} \label{eq 3.19}
\lim_{t \rightarrow \infty} \Sigma_T(u(t)) = \lim_{t \rightarrow \infty} E_u(t) - \lim_{t \rightarrow \infty} \frac{1}{2} ||u'(t)||_{L^2}^2 = I = \Sigma_T(\hat{u}).
\end{equation}
It follows that $u(t) \rightarrow \hat{u}$ in $H^2_*(\Omega)$ on the whole flow. Theorem 1.2 follows.

\bibliographystyle{amsplain}

\end{document}